\title{Representations are adjoint to endomorphisms}
\author{Gabriel C. Drummond-Cole}
\thanks{The first and third authors were supported by \texttt{IBS-R003-D1.}}
\address{Center for Geometry and Physics, Institute for Basic Science
	(IBS), Pohang, Republic of Korea 37673}
\email{gabriel@ibs.re.kr}
\author{Joseph Hirsh}
\thanks{The second author was supported by \texttt{NSF DMS-1304169}}
\email{josephhirsh@gmail.com}
\author{Damien Lejay}
\address{Center for Geometry and Physics, Institute for Basic Science
	(IBS), Pohang, Republic of Korea 37673}
\email{lejay@paracompact.space}
\date{}
\newcolumntype{?}[2]{!{\textcolor{#2}{\vrule width #1}}}
\newcommand{\from}{\colon}
\newcommand{\adj}{\mathrel{\dashv}}
\newcommand{\ground}{\mathscr{C}}
\newcommand{\domain}{\mathcal{D}}
\newcommand{\fun}{\operatorname{Fun}}
\newcommand{\slice}[2]{#1_{/#2}}
\DeclareFontFamily{U}{min}{}
\DeclareFontShape{U}{min}{m}{n}{<-> udmj30}{}
\newcommand{\sectionref}[1]{\hyperref[#1]{\S\thinspace\ref{#1}}}
\newcommand{\morita}[1]{\mathsf{Mor}_*}
\DeclareMathOperator*{\colim}{colim}
\newcommand{\fieldk}{\boldsymbol{K}}
\newcommand{\coint}[1]{\int_{#1}^{\ast}}
\newcommand{\forget}[1]{\mathrm{U}_{#1}}
\newcommand{\convolution}{\circledast}
\newcommand{\symgroup}[1]{\mathbf{S}_{#1}}
\newcommand{\triangletimes}{\mathbin{\triangleleft}}
\newcommand{\optriangletimes}{\mathbin{\triangleright}}
\newcommand{\isonat}{=}
\newcommand{\pointedbim}[1]{\mathsf{Bimod}_\bullet\mleft(#1\mright)}
\DeclareMathOperator{\Aut}{Aut}
\newcommand{\auto}[1]{\mathscr{E}(#1)}
\newcommand{\autofunctor}{\mathscr{E}}
\newcommand{\rep}[1]{#1\text{-}\mathsf{rep}}
\newcommand{\cog}[1]{#1\text{-}\mathsf{cog}}
\newcommand{\repfunctor}{\mathsf{Rep}}
\newcommand{\largecats}{\widehat{\mathsf{Cat}}}
\newcommand{\accesscats}{\mathsf{Acc}}
\newcommand{\moncat}{\mathscr{V}}
\newcommand{\biguni}{\mathscr{\widehat{V}}}
\newcommand{\monu}{\mathbf{1}}
\newcommand{\biggroups}{\widehat{\mathsf{Grp}}}
\newcommand{\monoids}[1]{\mathsf{Mon}\mleft(#1\mright)}
\newcommand{\bigsets}{\widehat{\mathsf{Sets}}}
\newcommand{\abelian}{\mathsf{Ab}}
\renewcommand{\op}{^\mathrm{op}}
\renewcommand{\hom}[3]{\mathrm{Hom}_{#1}(#2, #3)}
\newcommand{\nat}[3]{\mathrm{Nat}_{#1}(#2, #3)}
\newcommand{\catofmod}[1]{#1\text{-}\mathsf{mod}}
\newcommand{\einfinity}{\mathsf{E}_\infty}
\DeclareMathOperator{\Hom}{Hom}
\newcommand{\ul}{\underline}
\begin{document}

\begin{abstract}
	The functor that takes a ring to its category of modules
	has an adjoint if one remembers the forgetful functor to
	abelian groups: the \emph{endomorphism ring} of linear
	natural transformations.
	This uses the self-enrichment of the category of abelian groups.
	If one considers enrichments
	into symmetric sequences or even bisymmetric sequences, one
	can produce an \emph{endomorphism operad} or an \emph{endomorphism
	properad}.

	In this note, we show that more generally, given an category
	\( \ground \) enriched in a monoidal category \( \moncat \), the
	functor that associates to a monoid in \( \moncat \) its category
	of representations in \( \ground \) is adjoint to the functor
	that computes the \emph{endomorphism monoid} of any functor
	with domain \( \ground \).
	After describing the first results
	of the theory we give several examples of applications.
\end{abstract}

\maketitle

The functor that takes a ring \( R \) to its category of modules has an
adjoint, provided that in addition to \( \catofmod R \), one
remembers the forgetful functor
\[
	\catofmod R \longrightarrow \abelian.
\]
The adjoint sends a functor \( F \from \domain \to \abelian \) to its
\emph{endomorphism ring} \( \auto F \) of natural transformations. 
This fact is familiar to people working on duality results à la Tannaka.

If instead of using the self-enrichment \( \langle -, - \rangle \from
\abelian\op \times \abelian \to \abelian \), one uses an enrichment into
symmetric sequences or bisymmetric sequences, then \( \auto F \) can be
promoted to an \emph{endomorphism operad} or an \emph{endomorphism
properad}.
This is summarized in the table:

\begin{table}[h]
	\begin{tabular}{@{}SlSc@{}} 
		\toprule
		\multicolumn{1}{Sc}{\( \auto F \)} & enrichment
		\\
		\midrule
		endomorphism ring  & \( \langle X,Y\rangle \)
		\\
		endomorphism operad & \( \langle X^{\otimes n},Y\rangle \) 
		\\
		endomorphism properad
		& \( \langle X^{\otimes p},Y^{\otimes q}\rangle \)
		\\
		\bottomrule
	\end{tabular}
\end{table}

In this note we study the general case, replacing \( \abelian \) by a
category \( \ground \) enriched in a monoidal category \( \moncat \).
First we review representations of monoids in the 
context of an enriched category.
Then we describe the \emph{endomorphism monoid} of a 
functor whose target is an enriched category and show that this
construction is adjoint to the representations functor.

After describing the adjunction between monoids in \( \moncat \) and
functors with target \( \ground \), we shall study the basic
properties of this adjunction, in particular in the case where the
enrichment is also tensored. 

In two brief appendices, we provide quick definitions of terms in
enriched category theory that we need and give a few examples of
contexts in which this setup holds. 

The sequel, \emph{Endomorphism operads of functors}%
~\cite{arXiv:190609006D}, 
contains some explicit computations.

After seeing the definitions of the functors \( \autofunctor \)
and \( \repfunctor \) and their adjunction,
the reader is encouraged to take a look at the appendix%
~[\sectionref{Appendix: contexts}].
Some of the examples there might be surprising.

\section{Monoids and their representations}%
\label{section:monoids and their representations}

Let us fix a a bicomplete monoidal category \( \moncat \) and a category
\( \ground\) enriched in \(\moncat \):
\[
	\begin{tikzcd}
		\ground\op \times \ground \ar[rr,"{[-,-]}"] && \moncat.
	\end{tikzcd}
\]
For convenience, we shall assume given a \emph{locally large} universe
enlargement \( \moncat \hookrightarrow \biguni \)%
~[\sectionref{Appendix: enlargement of the universe}].
Because \( \moncat \hookrightarrow \biguni \) is fully faithful
and monoidal, one
has a fully faithful embedding of categories of monoids
\[
	\begin{tikzcd}
		\monoids{\moncat} \rar[hook] & \monoids{\biguni}.
	\end{tikzcd}
\]
In order to distinguish between the two, we shall say that a monoid in
\( \biguni \) is \emph{large}.

\begin{remark}[Endomorphism monoid of an object]
	Thanks to the \( \moncat \)\=/enrichment of \( \ground \),
	every object \(X \in \ground\)
	has a natural endomorphism monoid \( [X, X] \).
\end{remark}

\begin{definition}[Representations of monoids]%
	\label{definition:reps}
	Let \( M \) be a monoid.
	Its category of representations in \( \ground \)
	\[
		\rep M
	\]
	is the large category
	\begin{itemize} 
		\item whose objects are \( (X,\alpha) \)
		      where \( X \) is an object of
		      \( \ground \)
		      and \( \alpha \from M\to [X,X] \) is a map of monoids and
		\item whose morphisms \( (X,\alpha) \to (Y,\beta) \)
		      are maps \( f \from X \to Y \)
		      such that the following diagram commutes:
		      \[
		      	\begin{tikzcd}[ampersand replacement=\&]
		      		M
		      		\arrow[r, "\alpha"]
		      		\arrow[d, "\beta", swap]
		      		\& {[X,X]}
		      		\arrow[d, "f_\ast"] \\
		      		{[Y,Y]}
		      		\arrow[r, "f^\ast",swap]
		      		\& {[X, Y]}.
		      	\end{tikzcd}
		      \]
	\end{itemize}
\end{definition}

The category of representations of \( M \)
has an evident forgetful functor
\[
	\forget M \from \rep M \longrightarrow \ground
\]
that is both faithful and conservative. The assignment \(M \mapsto
\rep M\) is moreover functorial: given a morphism of monoids
\( \psi \from M \to N \), one has a commutative diagram
\[
	\begin{tikzcd}
		\rep M \ar[rd, "\forget M"'] && \rep N \ar[ld, "\forget N"]
		\ar[ll, "\forget \psi"']
		\\
		& \ground &
	\end{tikzcd}
\]

Denoting by \( \largecats \) the very large category
of large categories, one gets a \emph{representation functor}
\[
	\begin{tikzcd}
		\monoids{\moncat} \ar[rr, "\repfunctor"] &&
		\left(\slice{\largecats}{\ground}\right)\op.
	\end{tikzcd}
\]

\begin{remark}[Representations of large monoids]
	Since we have required \( \biguni \) to be \emph{locally large},
	the definition of the category of representations \( \rep M \)
	also makes sense for \( M \) a large monoid.
	Then, the large category \( \ground \) having been fixed,
	the representations functor extends to the category
	of large monoids:
	\[
		\begin{tikzcd}
			\monoids{\moncat\vphantom{\biguni}} \ar[rr, "\repfunctor"]
			\dar[hook] &&
			\left(\slice{\largecats}{\ground}\right)\op.
			\\
			\monoids{\biguni} \ar[rru, dashed] &&
		\end{tikzcd}
	\]
	Indeed, let \( M \) be a large monoid. 
	The cardinality of the objects of \( \rep M \) is bounded by
	\[
		\bigcup_{X \in \ground} \hom {\biguni} M {[X,X]}.
	\]
	Since \( \ground\) is large and \( \biguni \) is locally large,
	we deduce that \( \rep M \) has a large set of objects.
	Given two representations
	\( X \) and \( Y \) of a monoid \( M \), one has
	\[
		\hom {\rep M} X Y
		\subset \hom \ground {\forget M X} {\forget M Y}.
	\]
	Hence, since \( \ground\) has large sets of morphisms,
	so does \(\rep M \).
\end{remark}

\section{The endomorphism monoid of a functor}%
\label{section: The endomorphism monoid of a functor}

In this section we show that the representation functor
\(M \mapsto \rep M\) has a right adjoint
\[
	\left(
	\begin{tikzcd}
		\domain
		\ar[d, "F"]
		\\
		\ground
	\end{tikzcd}
	\right)
	\longmapsto \auto F.
\]
It takes as inputs large categories \( \domain \) over \( \ground \)
and outputs the \emph{endomorphism monoid}
\( \auto F \) of the functor \( F \from \domain \to \ground \).

\begin{remark}[Enriched natural transformations]
	Given a large category \( \domain \),
	the category of functors \(\fun ({\domain}, \ground)\)
	is naturally enriched in \( \biguni \) as follows.
	Given two functors \( F, G \from \domain \to \ground\),
	the \(\moncat \)-natural transformations from \( F \) to \( G\)
	are presented by the object of \(\biguni \) given by
	\[
		\nat \moncat F G \coloneqq \coint {\domain}
		[F -, G -],
	\]
	where, following Yoneda's original notation%
	~\cite[\S 4]{On_Ext_and_exact_sequences}, 
	\( \coint {\domain} \) denotes the \emph{cointegration} (or end) 
	of a functor \( \domain\op \times \domain \to \ground \).
\end{remark}

\begin{definition}%
	\label{definition: object of endos, closed case}
	The \emph{endomorphism monoid} of a functor
	\(F \from \domain \to \ground\) is
	\[
		\auto F \coloneqq \nat \moncat F F
	\]
	the (large) monoid
	of \( \moncat \)-natural transformations of \( F \).
\end{definition}

\begin{remark}[Functoriality of \( \autofunctor \)]
	As is the case in any 2-categorical setting,
	\( \moncat \)-natural transformations are
	compatible with `horizontal composition’ or `whiskering':
	\[
		\left(\begin{tikzcd}
			\domain' \rar["\Phi"]
			& \domain
			\ar[rr, shift left=3.5, phantom, ""{name=U, below}]
			\ar[rr, shift left=3.5, "F"]
			\ar[rr, shift right=3.5, "F"']
			\ar[rr, shift right=3.5, phantom, ""{name=D, above}]
			&& \ground
			\ar[Rightarrow, from=U, to=D]
		\end{tikzcd}\right)
		\longmapsto
		\left(\begin{tikzcd}
			\domain'  \ar[rr, shift left=3.5, ""{name=U, below}]
			\ar[rr, shift left=3.5, "F \circ \Phi"]
			\ar[rr, shift right=3.5, "F \circ \Phi"']
			\ar[rr, shift right=3.5, ""{name=D}]&& \ground
			\ar[Rightarrow, from=U, to=D]
		\end{tikzcd}\right).
	\]
	Thus, the construction \( F \mapsto \auto F \) is functorial
	in the sense that given
	\[
		\begin{tikzcd}
			\domain'\ar{r}{\Phi}
			&\domain \rar["F"] & \ground,
		\end{tikzcd}
	\]
	one gets a morphism of large monoids
	\[
		\Phi^\ast \from \auto F \longrightarrow \auto {F \circ \Phi}.
	\]
\end{remark}

\begin{theorem}%
	\label{theorem: adjunction}
	The functor \( \autofunctor \)
	is right adjoint to \( \repfunctor \)
	\[
		\begin{tikzcd}[ampersand replacement=\&]
			\monoids{\biguni} \arrow[rr, shift left=2,"\repfunctor"]
			\&
			\&
			\left(\slice{\largecats} \ground\right)\op.
			\arrow[ll, shift left=2, "\autofunctor"]
		\end{tikzcd}
	\]
\end{theorem}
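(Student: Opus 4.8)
The plan is to exhibit a bijection, natural in both arguments, between the set of morphisms \( \repfunctor M \to F \) in \( \bigl(\slice\largecats\ground\bigr)\op \) and the set of morphisms of large monoids \( M \to \auto F \), for \( F \from \domain \to \ground \) an object of \( \slice\largecats\ground \). First I would unwind the left-hand side: a morphism \( \repfunctor M \to F \) in the opposite slice is exactly a functor \( G \from \domain \to \rep M \) with \( \forget M \circ G = F \). Since \( G \) lies over \( \ground \), it must send an object \( d \) to a pair \( (Fd, \alpha_d) \) with \( \alpha_d \from M \to [Fd, Fd] \) a monoid morphism, and it must send a morphism \( \phi \from d \to d' \) to the morphism of representations whose underlying map is \( F\phi \); functoriality of \( G \) is then automatic from that of \( F \), so the only genuine constraint is that, for every \( \phi \), this underlying map really does define a morphism of representations, i.e.\ the commuting square of Definition~\ref{definition:reps} holds: \( (F\phi)_\ast \circ \alpha_d = (F\phi)^\ast \circ \alpha_{d'} \). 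Hence a morphism \( \repfunctor M \to F \) is the same datum as a family \( (\alpha_d)_{d \in \Ob\domain} \) of monoid morphisms satisfying this compatibility.

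Next I would recognize such a family as a wedge from the constant functor at \( M \) to \( [F-, F-] \from \domain\op \times \domain \to \moncat \): the compatibility above is exactly the wedge condition, since \( [F-,F-] \) carries \( (\mathrm{id},\phi) \) to postcomposition \( (F\phi)_\ast \) and \( (\phi,\mathrm{id}) \) to precomposition \( (F\phi)^\ast \). By the universal property of the end, such families correspond bijectively to morphisms \( M \to \coint\domain [F-,F-] = \auto F \) in \( \biguni \). It remains to match the monoid structures. The projections \( \pi_d \from \auto F \to [Fd, Fd] \) are monoid morphisms — the multiplication on \( \auto F \) being composition of \( \moncat \)-natural transformations, which is computed componentwise — and, being legs of a limit cone, the family \( (\pi_d)_d \) is jointly monic. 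Therefore a morphism \( M \to \auto F \) of \( \biguni \) underlies a monoid morphism if and only if each composite \( M \to \auto F \xrightarrow{\pi_d} [Fd, Fd] \) does, and this identifies monoid morphisms \( M \to \auto F \) with precisely the families of the previous paragraph.

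Finally I would verify naturality of the bijection. In the variable \( M \): a monoid morphism \( \psi \from M' \to M \) induces \( \repfunctor(\psi) \from \repfunctor M' \to \repfunctor M \), represented by the restriction-of-actions functor \( \rep M \to \rep{M'} \) over \( \ground \); precomposition by \( \repfunctor(\psi) \) on the left and precomposition by \( \psi \) on the right both send a family \( (\alpha_d) \) to \( (\alpha_d \circ \psi) \). In the variable \( F \): a morphism \( (\domain \xrightarrow{F}\ground) \to (\domain' \xrightarrow{F'}\ground) \) of the opposite slice is a functor \( \Phi \from \domain' \to \domain \) with \( F \circ \Phi = F' \), and the relevant square amounts to the statement that restricting a wedge for \( [F-,F-] \) along \( \Phi \) to a wedge for \( [F'-,F'-] \) is computed by the whiskering morphism \( \Phi^\ast \from \auto F \to \auto{F'} \) — again immediate from the universal property of the end. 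Alternatively, and perhaps more transparently, one can build the adjunction from its unit and counit: the unit at \( M \) is the tautological monoid morphism \( M \to \auto{\forget M} \) assembling the actions \( \alpha \from M \to [X,X] \) over all \( (X,\alpha) \in \rep M \) (these satisfy the wedge condition precisely by the definition of morphisms of representations), the counit at \( F \) is the functor \( \domain \to \rep{\auto F} \) sending \( d \) to \( (Fd, \pi_d) \), and the two triangle identities reduce to the joint monicity of the projections.

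The mathematical content here is slight, the argument being a careful unravelling of definitions; accordingly the main obstacle is bookkeeping — tracking the several opposite categories and directions of variance, and making fully precise the interaction between the monoid structure on \( \auto F \) and the end (equivalently, that the forgetful functor \( \monoids\biguni \to \biguni \) reflects the wedges in question). Size considerations add nothing, being handled exactly as in the remarks preceding the statement.
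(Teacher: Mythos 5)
Your proposal is correct and follows essentially the same route as the paper's own proof: unwind a functor \( \domain \to \rep M \) over \( \ground \) into a family of monoid maps \( M \to [F(X),F(X)] \) constrained exactly by the wedge condition, and identify such families with monoid morphisms \( M \to \auto F \) via the universal property of the end. Your treatment is simply more explicit than the paper's (spelling out the transfer of monoid structure via the jointly monic projections and the naturality checks that the paper leaves implicit), but it is the same argument.
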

There are a number of examples where this setup gives interesting
endomorphism monoids and interesting adjunctions~[\sectionref{Appendix:
contexts}].

\begin{proof}
	Observe that a functor from \( \domain \) to \( \rep M \)
	over \(\ground \) consists of:
	\begin{itemize} 
		\item at the object level, a monoid map \(M\xrightarrow{\psi_X}
		      [F(X),F(X)]\) for each object \( X \) of \( \domain \),
		      and
		\item at the morphism level, no data,
		      since the value on morphisms
		      is determined by being over \( \ground \)
		      and the functor from \( \rep M \) to \( \ground \)
		      is faithful.
	\end{itemize}
	However, to be a functor,
	the collection \( \psi_X \) must satisfy a condition
	so that for each map \( f \) in \( \hom {\domain} X Y \),
	the map \( F(f) \) is an \( M \)-representation map
	between \( F(X) \) and \( F(Y) \).
	This is precisely the condition for the maps \( \psi_X \)
	to assemble to a map \( \psi \from M \to \auto F \).
	Compatibility with the \( M \)\=/representation structures
	for each object \( X \)
	implies that \( \psi \) is a morphism of large monoids.
\end{proof}

\begin{example}%
	\label{example: object}
	Let \( X \from * \to \ground \) be an object of \( \ground \).
	Then the equalizer formula
	for the cointegral computing \( \auto X \) collapses to \( [X,X] \).
	So in this case \( \auto X \)
	recovers the ordinary endomorphism object \( [X,X] \).
\end{example}

\begin{example}%
	\label{example: morphism}
	Let \( f\from \upDelta^{\!1}\to\ground \)
	be a morphism of \( \ground \),
	with domain \( X \) and codomain \( Y \).
	Again the cointegral has a simple description
	via the equalizer formula;
	it is the pullback of \( [X,X]\) and \([Y,Y] \) over
	\( [X,Y] \).
	\[
		\auto f\isonat [X,X]
		\bigtimes_{[X,Y]}[Y,Y].
	\]
	This is sometimes called the endomorphism monoid of \( f \)%
	~\cite[13.10]{isbn:978-1-4704-2723-8}.
\end{example}

\begin{remark}[Generalized enrichments]
	We have taken as our fundamental input an enrichment
	of the category \( \ground \)
	in the monoidal category \( \moncat \).
	A generalization of this framework
	is to consider instead a lax functor
	\[
		\ground \longrightarrow \pointedbim\moncat
	\]
	where \( \pointedbim\moncat \) is the bicategory
	whose objects are monoids in \( \moncat \),
	whose morphisms are pointed bimodules,
	and whose \( 2 \)-morphisms are maps of bimodules.
	
	Let us present an example of such a generalized enrichment that
	does not fit directly in our framework.
	Let \( \ground \) be a large category, seen as naturally enriched in
	large sets.
	There is a lax functor
	\[
		\ground \longrightarrow \pointedbim\bigsets
	\]
	given on objects by
	\[
		X \longmapsto \Aut(X),
	\]
	which sends a map \( f \from X \to Y \) to
	\[
		{\Hom(X,Y)}_f \coloneqq \Hom(X,Y)\text{ pointed by }f
	\]
	and which sends the composite of two maps \( f \) and \( g \) to
	\[
		{\Hom(X,Y)}_f \otimes_{\Aut(Y)} {\Hom(Y,Z)}_g \longrightarrow
		{\Hom(X,Z)}_{fg}.
	\]
	Using the same ideas,
	one can see how to produce a generalized enrichment
	out of a \( \moncat \)-enriched category \( \ground \) via
	\[
		X \longmapsto [X, X].
	\]
	
	The cointegral defining the endomorphism monoid
	of a functor \( F \) has a
	natural extension to the generalized framework.
	
	The generalized enrichment of our example
	yields the following adjunction
	\[
		\begin{tikzcd}[ampersand replacement=\&]
			\biggroups \arrow[rr, shift left=2,"\repfunctor"]
			\&
			\&
			\left(\slice{\largecats} \ground\right)\op.
			\arrow[ll, shift left=2, "\mathscr{A}\mathrm{ut}"]
		\end{tikzcd}
	\]
	Of course one could --- indirectly --- obtain the adjunction
	between representations and automorphism groups
	by first taking the monoid of endomorphisms
	and then restricting to groups.
\end{remark}

\section{Small endomorphism monoids}%
\label{section: Refined adjunction in the accessibly tensored case}

When the domain category \( \domain \) of \( F \) is small,
the endomorphism monoid \( \auto F \) is obviously small.
We shall show that this is still the case
when \( \domain \) is large under appropriate accessibility conditions.

\begin{lemma}[Accessible reduction]%
	\label{lemma: accessible reduction}
	Assume that the category \( \ground \) is accessibly enriched%
	~\textnormal{[\cref{def: accessible enrichment}]},
	\( \domain \) is an accessible category
	and \( F \from \domain \to \ground \) is an accessible functor.
	
	Let \( \kappa \) be a small cardinal big enough
	so that \( \domain \) is \( \kappa \)-accessible
	and so that both \( F \) and \( X \mapsto [X,Y] \) commute
	with \(\kappa \)-filtered colimits.
	Let us denote by \( F^\kappa\) the restriction of \(F \)
	to the full subcategory \( \domain^\kappa \subset \domain \)
	of \(\kappa \)-compact objects of \( \domain \).
	Then the canonical map
	\[
		\auto F \longrightarrow \auto {F^\kappa}
	\]
	is an isomorphism. In particular \( \auto F \) is a (small) monoid.
\end{lemma}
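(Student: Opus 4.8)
The plan is to realize the comparison map as restriction along the inclusion $J\colon \domain^\kappa \hookrightarrow \domain$ of the cointegrals computing $\auto F = \coint{\domain}[F-,F-]$ and $\auto{F^\kappa} = \coint{\domain^\kappa}[F-,F-]$, and to produce an explicit inverse using that $\domain$ is the $\kappa$-filtered cocompletion of $\domain^\kappa$. By the Yoneda lemma it suffices to show that for every $T \in \biguni$, restriction along $J$ is a bijection between $\moncat$-wedges (dinatural transformations) $T \Rightarrow [F-,F-]$ indexed by $\domain$ and those indexed by $\domain^\kappa$. So fix a wedge $(\eta_Z \colon T \to [FZ,FZ])_{Z\in\domain^\kappa}$ over $\domain^\kappa$; we extend it, uniquely, to $\domain$.

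For $X \in \domain$, write $X = \colim_{(u\colon Z\to X)} Z$ for the canonical $\kappa$-filtered colimit over the comma category $\domain^\kappa\downarrow X$. Since $F$ preserves $\kappa$-filtered colimits, $FX = \colim_{u} FZ$; since $X \mapsto [X,W]$ sends $\kappa$-filtered colimits to limits (by the hypothesis on $\kappa$), $[FX,W] = \lim_{u}[FZ,W]$ for every $W\in\ground$, and in particular $[FX,FX]=\lim_{u}[FZ,FX]$. Define $\eta_X$ to be the map into this limit whose component at $u\colon Z\to X$ is $[\mathrm{id},Fu]\circ\eta_Z \colon T \to [FZ,FX]$. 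These components are compatible over $\domain^\kappa\downarrow X$ precisely by the dinaturality of $\eta$ on the morphisms of $\domain^\kappa$, so $\eta_X$ is well defined; and when $X$ is itself $\kappa$-compact, $\mathrm{id}_X$ is terminal in $\domain^\kappa\downarrow X$, whence $\eta_X$ recovers the original.

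It remains to check that $X\mapsto\eta_X$ is a wedge over $\domain$ and is the only one restricting to $\eta$; both reduce to the defining property of $\kappa$-compactness, that every morphism out of an object of $\domain^\kappa$ into a $\kappa$-filtered colimit factors through a colimit inclusion. For dinaturality on $f\colon X\to X'$ one compares $[FX,Ff]\circ\eta_X$ with $[Ff,FX']\circ\eta_{X'}$ after composing with each projection $[FX,FX']\to[FZ,FX']$ indexed by $u\colon Z\to X$ in $\domain^\kappa$; unwinding the definition of $\eta_X$ and $\eta_{X'}$, both sides become $[\mathrm{id},F(fu)]\circ\eta_Z$, up to factoring $fu\colon Z\to X'$ through some $v\colon Z'\to X'$ with $Z'\in\domain^\kappa$, which reduces the equality to dinaturality of $\eta$ on the morphism $Z\to Z'$ of $\domain^\kappa$. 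For uniqueness, any wedge $\bar\eta$ over $\domain$ with $\bar\eta|_{\domain^\kappa}=\eta$ must satisfy, by dinaturality on $u\colon Z\to X$, that the $u$-component of $\bar\eta_X$ in $[FZ,FX]$ is $[\mathrm{id},Fu]\circ\bar\eta_Z=[\mathrm{id},Fu]\circ\eta_Z$, forcing $\bar\eta_X=\eta_X$. The bookkeeping in these two steps is the only real work; conceptually it just records that $F$ is the left Kan extension of $F^\kappa$ along $J$ and that $\moncat$-natural transformations out of a left Kan extension are computed by restriction, but it must be verified directly since here such transformations are defined by the cointegral formula and one needs that formula to mesh with the relevant $\kappa$-filtered colimits.

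Finally, $\auto{F^\kappa}=\coint{\domain^\kappa}[F-,F-]$ is an end over the essentially small category $\domain^\kappa$ of a functor valued in the bicomplete category $\moncat$, hence lies in $\moncat$; by the isomorphism just established, so does $\auto F$, which is therefore a small monoid.
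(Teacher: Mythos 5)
Your proof is correct and follows essentially the same route as the paper: both rest on the decomposition \( [F(X),F(X)] \isonat \lim_{Z \to X}[F(Z),F(X)] \) over the \( \kappa \)-compact objects and on extending the data on \( \domain^\kappa \) to all of \( \domain \) via these limits, the paper doing this directly for the universal wedge out of \( \auto{F^\kappa} \) while you package the same computation through the Yoneda lemma with an arbitrary test object \( T \). Your explicit uniqueness step is just the (left implicit in the paper) verification that the section so constructed is a two-sided inverse, so the two arguments match in substance.
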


\begin{proof}
	Using the universal property of the cointegrals, it is enough to
	show the existence of compatible maps
	\[
		\begin{tikzcd}
			\auto {F^\kappa} \ar[r, "\varphi_X"] & {[F(X),F(X)]}
		\end{tikzcd}
	\]
	for every \( X \in \domain \),
	such that for every \( \kappa \)-compact \( X^\kappa \),
	the map \( \varphi_{X^\kappa} \) is equal to the projection map
	\( \pi_{X^\kappa}\from \auto {F^\kappa}
	\to [F(X^\kappa), F(X^\kappa)] \).
	
	Since every \( X \in \domain\)
	is canonically the \(\kappa \)-filtered colimit
	\( X \isonat \colim_{X^\kappa \to X} X^\kappa \)
	of the \( \kappa \)-compact objects over it,
	\[
		[F(X),F(X)]
		\isonat \lim_{X^\kappa \to X}[F(X^\kappa),F(X)].
	\]
	Every map \( g \from X^\kappa \to X \) induces a morphism
	\[
		\begin{tikzcd}
			\auto {F^\kappa} \ar[r, "\pi_{X^\kappa}"]
			& {[F(X^\kappa),F(X^\kappa)]} \ar[r, "g_\ast"]
			& {[F(X^\kappa),F(X)]}
		\end{tikzcd}
	\]
	and given \( h \from \ul{X}^\kappa \to X^\kappa \), one can
	draw a commutative diagram
	\[
		\begin{tikzcd}
			\auto {F^\kappa}
			\rar["\pi_{X^\kappa}"]
			\dar["\pi_{\ul{X}^\kappa}"']
			&
			{[F(X^\kappa),F(X^\kappa)]}
			\dar["h^\ast"]
			\rar["g_\ast"]
			&
			{[F(X^\kappa),F(X)]}
			\dar["h^\ast"]
			\\
			{[F(\ul{X}^\kappa),F(\ul{X}^\kappa)]}
			\rar["h_\ast",swap]
			&
			{[F(\ul{X}^\kappa),F(X^\kappa)]}
			\rar["g_\ast",swap]
			&
			{[F(\ul{X}^\kappa),F(X)]}
		\end{tikzcd}
	\]
	where the commutation of the first square is guaranteed by the
	universal property of \( \auto {F^\kappa} \).
	This shows that we get a well-defined morphism \( \varphi_X \)
	for every \(X \in \domain \).
	
	By construction of \( \varphi_X \), the following diagram commutes
	\[
		\begin{tikzcd}[ampersand replacement=\&]
			\auto {F^\kappa}
			\arrow[r, "\varphi_X"]
			\arrow[d, "\pi_{X^\kappa}", swap]
			\& {[F(X), F(X)]}
			\arrow[d, "g^\ast"] \\
			{[F(X^\kappa), F(X^\kappa)]}
			\arrow[r, "g_\ast", swap]
			\& {[F(X^\kappa), F(X)]},
		\end{tikzcd}
	\]
	hence when \( g\) is the identity
	of a \( \kappa \)-compact object \( X^\kappa \),
	we get \( \pi_{X^\kappa} = \varphi_{X^\kappa} \) as promised.
	
	Let \( f \from X \to Y \) be a morphism in \( \domain \).
	We need to check the commutativity of the induced square
	\[
		\begin{tikzcd}[ampersand replacement=\&]
			\auto {F^\kappa}
			\arrow[r, "\varphi_X"]
			\arrow[d, "\varphi_Y", swap]
			\& {[F(X),F(X)]}
			\arrow[d, "f_\ast"] \\
			{[F(Y),F(Y)]}
			\arrow[r, "f^\ast", swap]
			\& {[F(X), F(Y)]}.
		\end{tikzcd}
	\]
	By accessibility again,
	one may check the equality \(f^\ast \varphi_Y = f_\ast \varphi_X\)
	after projection
	\( g^\ast \from [F(X), F(Y)] \to [F(X^\kappa),F(Y)] \)
	for every \( g \from X^\kappa \to X \). 
	Then by the commutativity of the diagrams
	\[
		\begin{tikzcd}[ampersand replacement=\&]
			\auto {F^\kappa} \rar["\varphi_X"] \dar["\pi_{X^\kappa}"']
			\& {[F(X),F(X)]} \dar["f_\ast g^\ast"]
			\\
			{[F(X^\kappa),F(X^\kappa)]}
			\rar["f_\ast g_\ast",swap] \& {[F(X^\kappa),F(Y)]}
		\end{tikzcd}
	\]
	and
	\[
		\begin{tikzcd}[ampersand replacement=\&]
			 \auto {F^\kappa} \rar["\varphi_Y"] \dar["\pi_{X^\kappa}"']
			\& {[F(Y), F(Y)]} \dar["{(fg)}^\ast"]
			\\
			{[F(X^\kappa),F(X^\kappa)]} \rar["{(fg)}_\ast",swap]
			\& {[F(X^\kappa),F(Y)]},
		\end{tikzcd}
	\]
	we may conclude the desired result.
\end{proof}

\begin{remark}[Accessibility of the category of representations]
	In view of the previous reduction lemma,
	one may wonder whether \( \forget M \from \rep M \to \ground \)
	is an accessible functor between accessible categories
	whenever \( \ground \) is accessibly enriched.
	
	This appears to be an intricate question in general:
	it is still unknown whether the category of bigebras
	over some well-known props are actually accessible.
	In the particular case where \( \ground \) is accessibly
	tensored (or cotensored), this question receives a positive answer.
	We shall give more details about this case in the next section.
	
	Cogebras over a dg-operad in characteristic zero give
	an example of an accessibly enriched context%
	~[\cref{appendix: cogebras}] that is neither tensored,
	nor cotensored, in which \( P\text{-}\mathsf{cog} \) is accessible
	for any dg-operad \( P \)%
	~\cite{arXiv:180301376L}. 
\end{remark}

\section{The case of tensored enrichment}

In the case where \( \ground \) is tensored over \( \moncat \),
the additional structure allows one to say more
about the adjunction between representations and endomorphisms,
particularly when the tensor structure is well-behaved.

\subsection{The adjunction in the accessibly tensored case}
In the case where forgetful functors are accessible,
we no longer need to have jumps in sizes and we get a refined adjunction
with the category of \emph{small} monoids.

\begin{proposition}[Accessibly tensored case]%
	\label{proposition: accessibly tensored case}
	Assume that \( \ground \) is accessibly tensored over \( \moncat \).
	Then there is an adjunction
	\[
		\begin{tikzcd}[ampersand replacement=\&]
			\monoids{\moncat} \arrow[rr, shift left=2,"\repfunctor"]
			\& \&
			\left(\slice{\accesscats} \ground\right)\op
			\arrow[ll, shift left=2, "\autofunctor"]
		\end{tikzcd}
	\]
	in which \( \accesscats \)
	is the very large category of large accessible categories
	and accessible functors.
\end{proposition}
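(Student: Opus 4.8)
The plan is to deduce this refined adjunction from \cref{theorem: adjunction} by restricting the big adjunction along the fully faithful embedding $\monoids{\moncat} \hookrightarrow \monoids{\biguni}$ and along the (not necessarily full) inclusion $\slice{\accesscats}{\ground} \hookrightarrow \slice{\largecats}{\ground}$. There are three things to check: that $\autofunctor$ carries $\slice{\accesscats}{\ground}$ into $\monoids{\moncat}$; that $\repfunctor$ carries $\monoids{\moncat}$ into $\slice{\accesscats}{\ground}$; and that between the objects that arise the two inclusions identify hom-sets, so that the adjunction bijection of \cref{theorem: adjunction} descends.

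The first point I would get from \cref{lemma: accessible reduction}. Given an object $F \from \domain \to \ground$ of $\slice{\accesscats}{\ground}$ --- so $\domain$ is accessible and $F$ is accessible --- I would use that an accessibly tensored enrichment is in particular an accessible enrichment [\cref{def: accessible enrichment}], so that a small cardinal $\kappa$ can be chosen meeting the hypotheses of that lemma; it then gives $\auto F \isonat \auto{F^\kappa}$, an end over the essentially small category $\domain^\kappa$, hence a small monoid in $\moncat$. Since both endpoints of any morphism of $\slice{\accesscats}{\ground}$ then have small endomorphism monoid, $\autofunctor$ restricts to a functor $(\slice{\accesscats}{\ground})\op \to \monoids{\moncat}$.

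The second point is where the tensoring really enters, and I expect it to be the main obstacle. Via the tensor--hom adjunction $\hom{\ground}{V \otimes X}{Y} \isonat \hom{\moncat}{V}{[X,Y]}$, a monoid map $M \to [X,X]$ is the same datum as an associative and unital map $M \otimes X \to X$, and a morphism in $\rep M$ is the same as a morphism of such algebras; in other words $\rep M$ is, compatibly over $\ground$, the Eilenberg--Moore category of the monad $T_M \coloneqq M \otimes (-)$ on $\ground$. Since $\ground$ is accessibly tensored it is in particular an accessible category and $T_M$ is a $\kappa$-accessible monad for $\kappa$ large enough, so I would invoke the standard fact that the Eilenberg--Moore category of an accessible monad on an accessible category is accessible, with forgetful functor accessible and creating all limits and all $\kappa$-filtered colimits. (Alternatively one can build $\rep M$ inside the $2$-category $\accesscats$ as an inserter of $T_M$ and the identity, then equifiers encoding associativity and unitality; $\accesscats$ is closed under such weighted limits and the projections create the relevant filtered colimits.) Either way, $\repfunctor M = (\forget M \from \rep M \to \ground)$ is an object of $\slice{\accesscats}{\ground}$.

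For the last point: given $M \in \monoids{\moncat}$ and an object $F \from \domain \to \ground$ of $\slice{\accesscats}{\ground}$, on the monoid side both $M$ and $\auto F$ are small, so $\hom{\monoids{\biguni}}{M}{\auto F} = \hom{\monoids{\moncat}}{M}{\auto F}$ by full faithfulness; on the category side a morphism $\repfunctor M \to F$ in $(\slice{\largecats}{\ground})\op$ is a functor $G \from \domain \to \rep M$ over $\ground$, and choosing $\kappa$ with $\domain$ $\kappa$-accessible and $F$ preserving $\kappa$-filtered colimits, the facts that $\forget M$ creates $\kappa$-filtered colimits and is conservative force $G$ to preserve $\kappa$-filtered colimits, hence to be accessible. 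So every such $G$ already lies in $\slice{\accesscats}{\ground}$, and the same argument applied to $\forget \psi$ shows $\repfunctor$ restricts to a functor $\monoids{\moncat} \to (\slice{\accesscats}{\ground})\op$. The natural bijection of \cref{theorem: adjunction} then restricts verbatim and yields the stated adjunction; the only ingredient beyond bookkeeping with universes and variances is the accessibility of Eilenberg--Moore categories of accessible monads invoked above.
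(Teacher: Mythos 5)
Your proposal is correct and follows essentially the same route as the paper: accessible reduction [\cref{lemma: accessible reduction}] to make \( \auto F \) small, the identification of \( \rep M \) with the Eilenberg--Moore category of the accessible monad \( M \otimes (-) \) to place \( \forget M \) in \( \slice{\accesscats}{\ground} \), and a conservativity-plus-accessibility argument for functors over \( \ground \) into \( \rep M \). The only cosmetic difference is that you verify the hom-set identification for all morphisms, whereas the paper phrases the same argument only for the counit and concludes via the unit/counit formulation.
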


For this one restricts the adjunction
\( \repfunctor \adj \autofunctor \)
using accessible reduction%
~[\cref{lemma: accessible reduction}] and the following lemmas.

\begin{lemma}%
	\label{thm: accessible category of rep}
	If \( \ground \) is accessibly tensored, then for every
	monoid \( M \), the category of representations \( \rep M \) is
	accessible and the forgetful functor
	\[
		\begin{tikzcd}
			\rep M \ar[rr, "\forget M"] && \ground
		\end{tikzcd}
	\]
	is accessible.
\end{lemma}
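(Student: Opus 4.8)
The plan is to recognise \( \rep M \) as the category of Eilenberg--Moore algebras for a monad on \( \ground \), and then to appeal to the standard fact that algebras over an accessible monad on an accessible category form an accessible category whose forgetful functor is accessible. For the monad itself: because \( \ground \) is tensored over \( \moncat \), for each object \( X \) the functor \( -\otimes X\from \moncat\to\ground \) is left adjoint to \( [X,-]\from\ground\to\moncat \). The monoid structure of \( M \) then equips the endofunctor \( T\coloneqq M\otimes(-) \) of \( \ground \) with the structure of a monad: the multiplication \( M\otimes M\to M \) and unit \( \monu\to M \) supply the natural transformations \( T^2\Rightarrow T \) and \( \mathrm{id}\Rightarrow T \), and the monad axioms follow from the monoid axioms together with the coherence constraints of the tensoring.

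Next I would show \( \rep M \) is monadic. Transposing across the adjunction \( (-\otimes X)\dashv[X,-] \), a monoid map \( \alpha\from M\to[X,X] \) is the same datum as a map \( a_X\from M\otimes X\to X \); the requirement that \( \alpha \) respect multiplication and unit transposes exactly to the associativity and unit axioms for \( a_X \) as a \( T \)-algebra structure. Likewise, transposing the commuting square of \cref{definition:reps} shows that \( f\from X\to Y \) underlies a morphism \( (X,\alpha)\to(Y,\beta) \) in \( \rep M \) precisely when \( f\circ a_X=a_Y\circ(M\otimes f) \), i.e. when \( f \) is a morphism of \( T \)-algebras. Hence there is an isomorphism of categories \( \rep M\cong\ground^{T} \) over \( \ground \), carrying \( \forget M \) to the Eilenberg--Moore forgetful functor.

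Finally, accessibility. Since \( \ground \) is accessibly tensored, there is a regular cardinal \( \kappa \) such that \( \ground \) is \( \kappa \)-accessible and the tensoring preserves \( \kappa \)-filtered colimits in the \( \ground \)-variable; in particular \( T=M\otimes(-) \) preserves \( \kappa \)-filtered colimits, so \( T \) is an accessible monad on the accessible category \( \ground \). By the standard theorem on accessible monads (e.g.\ Ad\'amek--Rosick\'y, \emph{Locally Presentable and Accessible Categories}), \( \ground^{T} \) is accessible and its forgetful functor to \( \ground \) is accessible (it creates, hence preserves, \( \kappa \)-filtered colimits). Transporting along the isomorphism of the previous step yields the statement.

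The substance is entirely in the monadicity step: one must check that the transposition \( \alpha\leftrightarrow a_X \) is compatible with the identification of morphisms, so that it assembles into an \emph{isomorphism of categories over \( \ground \)} rather than merely a bijection on objects --- this is where the enriched bookkeeping (that a \( \moncat \)-monoid map \( M\to[X,X] \) really does transpose to an honest action \( M\otimes X\to X \) in \( \ground \)) has to be done carefully. One should also confirm that the definition of ``accessibly tensored'' supplies a uniform \( \kappa \) controlling \( \ground \) and the endofunctor \( M\otimes(-) \) at once; if instead it only controls \( V\otimes(-) \) for \( V \) ranging over a generating family, a short extra argument promotes it to arbitrary \( M \). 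An alternative to the monadic route would be to present \( \rep M \) as an iso-inserter-type \( 2 \)-limit of accessible categories and accessible functors and invoke closure of \( \accesscats \) under such limits, but the monad argument is shorter and also delivers the accessibility of \( \forget M \) directly.
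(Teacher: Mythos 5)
Your proposal is correct and follows essentially the same route as the paper: transpose a representation \( \alpha \from M \to [X,X] \) across the tensoring adjunction to identify \( \rep M \), compatibly with \( \forget M \), with the category of modules over the accessible monad \( M \otimes (-) \), then invoke the standard accessibility result for such monads. Your side worry about a uniform cardinal is already handled by the paper's definition of accessibly tensored, which demands accessibility of \( X \mapsto M \otimes X \) for every \( M \in \moncat \), not just a generating family.
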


\begin{proof}
	Because the functor \( M \mapsto (M \otimes -) \) is monoidal%
	~[See \cref{definition: tensored category}],
	each monoid \( M \) induces an accessible monad
	\( \widetilde M \)
	with underlying functor \( X \mapsto M \otimes X \).
	As a consequence its category of modules is accessible
	and the forgetful functor
	\[
		\begin{tikzcd}
			\catofmod {\widetilde M} \ar[rr, "\forget {\widetilde M}"]
			&& \ground
		\end{tikzcd}
	\]
	is accessible.
	
	We now claim that there is a canonical equivalence of categories
	\[
		\rep M \isonat \catofmod {\widetilde M},
	\]
	compatible with the forgetful functors.
	Let \( (X,\alpha) \) be a representation of \( M \).
	Then the monoid morphism \(\alpha \from M \to [X, X] \)
	is equivalent by adjunction
	to an \( \widetilde M \)-module structure
	\( \widetilde \alpha \from M \otimes X \to X \).
	Let \( (Y, \beta) \) be another representation of \( M \),
	then \( f \from X \to Y \) is a morphism of representations if
	\[
		\begin{tikzcd}[ampersand replacement=\&]
			M
			\arrow[r, "\alpha"]
			\arrow[d, "\beta", swap]
			\& {[X,X]}
			\arrow[d, "f_\ast"] \\
			{[Y,Y]}
			\arrow[r, "f^\ast",swap]
			\& {[X, Y]}
		\end{tikzcd}
	\]
	commutes.
	By adjunction the top right part of the diagram
	is equivalent to \( M\otimes X \to X \to Y \)
	and the bottom left
	is equivalent to \( M \otimes X \to M \otimes Y \to Y \)
	so that the commutativity of the above square
	is equivalent to the commutativity of
	\[
		\begin{tikzcd}[ampersand replacement=\&]
			M \otimes X
			\arrow[rr, "M \otimes f"]
			\arrow[d, "\widetilde \alpha", swap]
			\&\& M \otimes Y
			\arrow[d, "\widetilde \beta"] \\
			X \arrow[rr, "f",swap]
			\&\& Y.
		\end{tikzcd}
	\]
	Hence \( f \from X \to Y \) is a morphism of \( M \)-representations
	if and only if it is a morphism of \( \widetilde M \)-modules.
\end{proof}

\begin{lemma}
	Let \( F \from \domain \to \ground \) be an accessible functor with
	accessible domain.
	Then the counit of the adjunction \(\repfunctor
	\adj \autofunctor\) applied to \( F \)
	\[
		\begin{tikzcd}
			\domain \ar[rr] \ar[dr, "F"'] & &\rep {\auto F}
			\ar[ld, "\forget {\auto F}"]
			\\
			& \ground &
		\end{tikzcd}
	\]
	is given by an accessible functor.
\end{lemma}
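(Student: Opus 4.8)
The plan is to use the equivalence \( \rep {\auto F} \isonat \catofmod{\widetilde{\auto F}} \) from \cref{thm: accessible category of rep}, which holds because \( \ground \) is accessibly tensored, and under which the forgetful functor \( \forget{\auto F} \) becomes the accessible monadic forgetful functor \( \forget{\widetilde{\auto F}} \). By accessible reduction~[\cref{lemma: accessible reduction}], the monoid \( \auto F \) is small, so the monad \( \widetilde{\auto F} = (\auto F) \otimes - \) is accessible and \( \catofmod{\widetilde{\auto F}} \) is an accessible category. Thus both source and target of the counit functor \( c \from \domain \to \rep{\auto F} \) are accessible, and it remains only to show that \( c \) itself is an accessible functor.

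First I would recall the explicit description of \( c \): it sends an object \( X \) of \( \domain \) to \( F(X) \) equipped with the representation structure \( \psi_X \from \auto F \to [F(X), F(X)] \) given by the projection map from the cointegral, and on morphisms it agrees with \( F \) after applying the faithful forgetful functor. In particular the triangle \( \forget{\auto F} \circ c = F \) commutes on the nose. Now \( F \) is accessible by hypothesis, so it preserves \( \kappa \)-filtered colimits for some small \( \kappa \) (which we may enlarge so that it also works for \( \domain \), for \( [-, Y] \), and for the monad \( \widetilde{\auto F} \)).

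The key step is then: \( c \) is accessible because \( \forget{\auto F} \) \emph{creates} \( \kappa \)-filtered colimits. Indeed, for a monadic forgetful functor from a category of modules over an accessible monad, \( \kappa \)-filtered colimits are computed in the underlying category and lifted canonically. So given a \( \kappa \)-filtered diagram \( D \from I \to \domain \), the colimit \( \colim c \circ D \) in \( \rep{\auto F} \simeq \catofmod{\widetilde{\auto F}} \) has underlying object \( \colim F \circ D \isonat F(\colim D) \), with the module structure obtained as the unique lift. On the other hand \( c(\colim D) \) has the same underlying object \( F(\colim D) \) with its canonical representation structure, and one checks these two module structures coincide — both are the canonical lift of the colimit module structures, so uniqueness of the lift forces them to agree. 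Hence \( c \) preserves \( \kappa \)-filtered colimits, i.e.\ \( c \) is accessible.

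The main obstacle is the last verification: that the representation structure \( \psi_{\colim D} \) coming from the cointegral defining \( \auto F \) agrees with the one produced by creation of filtered colimits in \( \catofmod{\widetilde{\auto F}} \). This is exactly where \cref{lemma: accessible reduction} does the real work — it identifies \( \auto F \) with \( \auto{F^\kappa} \) and describes \( \psi_X = \varphi_X \) as the universal map induced by the projections \( \pi_{X^\kappa} \) along all \( g \from X^\kappa \to X \); combined with the fact that \( [F(\colim D), F(\colim D)] \isonat \lim_{X^\kappa \to \colim D}[F(X^\kappa), F(\colim D)] \), one reduces the comparison to the \( \kappa \)-compact level, where it is immediate because on \( \domain^\kappa \) the functor \( c \) restricted there is literally the counit for \( F^\kappa \) and the colimit in question is already in the underlying category. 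The rest is the (routine) bookkeeping that an accessible functor between accessible categories is determined by its behavior on \( \kappa \)-compact objects together with preservation of \( \kappa \)-filtered colimits.
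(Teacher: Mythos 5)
Your argument is correct, but it proves the key step by a different mechanism than the paper does. You pass through the identification \( \rep{\auto F}\isonat\catofmod{\widetilde{\auto F}} \) and invoke the fact that the Eilenberg--Moore forgetful functor of an accessible monad \emph{creates} \( \kappa \)-filtered colimits, then check that the created module structure on \( \colim F\circ D \) agrees with the one on \( c(\colim D) \). The paper's proof is a one-line conservativity argument: writing \( c \) for the counit, one has \( \forget{\auto F}\circ c = F \), both \( F \) and \( \forget{\auto F} \) preserve \( \kappa \)-filtered colimits (the latter by \cref{thm: accessible category of rep}), so the comparison map \( \colim c\circ D\to c(\colim D) \) becomes an isomorphism after applying \( \forget{\auto F} \); since \( \forget{\auto F} \) is conservative, it was already an isomorphism. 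Your creation argument is a strictly stronger input (creation implies preservation plus conservativity on these colimits) and is available only after unwinding the monadic description, whereas the paper gets by with properties already on the table; on the other hand your route makes the lifted colimit completely explicit, which some readers may find clarifying. Two small remarks: your final paragraph, reducing the comparison of the two representation structures to the \( \kappa \)-compact level via \cref{lemma: accessible reduction}, is unnecessary --- functoriality of the counit already makes the cocone maps \( c(D_i)\to c(\colim D) \) into morphisms of representations, so uniqueness in the creation statement settles the comparison with no reference to the cointegral --- and the genuine role of \cref{lemma: accessible reduction} here is simply to guarantee that \( \auto F \) is a small monoid so that \cref{thm: accessible category of rep} (stated for monoids in \( \moncat \)) applies, a point you do use correctly at the start.
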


\begin{proof}
	The top map of the diagram if accessible because the two other maps
	are accessible%
	~[\cref{thm: accessible category of rep}]
	and the forgetful functor
	\( \rep {\auto F} \to \ground \) is conservative.
\end{proof}

\begin{remark}
	In the accessibly tensored case, the representation functor factors
	through the category of accessible monads on \( \ground \).
	Using an adapted version of a result of Janelidze and Kelly%
	~\cite{zbMATH01687308}, one can show that the adjunction
	\( \repfunctor \adj \autofunctor \) factors as a composite of
	adjunctions
	\[
		\begin{tikzcd}[column sep = huge]
			\monoids{\moncat}
			\arrow[r, shift left=2,"M \longmapsto \widetilde M"]
			&
			\mathsf{Monads}_\mathrm{acc}(\ground)
			\ar[l, shift left=2]
			\ar[r, shift left=2, "\mathsf{Mod}"]
			&
			\left(\slice{\accesscats} \ground\right)\op.
			\arrow[l, shift left=2]
		\end{tikzcd}
	\]
\end{remark}

\subsection{Faithfulness of \texorpdfstring{\( \repfunctor \)}{Rep}}

The question of reconstructing a monoid \( M\) out of its category \(\rep M \)
of representations is an old one, in the Tannakian context for
example%
~[\cref{appendix: Tannaka}].
Such a result cannot be obtained in general without additional
hypotheses.
Instead one can look at the opportunity of recovering \( M \)
as a \emph{submonoid} of \( \auto {\forget M} \).

This is the question of faithfulness
of the \( \repfunctor \) functor which is of independent interest.
As an example, one can view Joyal's results on analytic monads%
~\cite{doi:10.1007/bfb0072514}
as saying in particular that the representation functor is faithful
in the case where \( \ground \) is the
category of sets operadically enriched in symmetric sequences.

The representation functor \( M \mapsto \rep M \)
is a priori not faithful.
A trivial example of this takes \( \ground \) to be the empty category. 
A nontrivial example of independent interest is given by looking
at the functor \( P \mapsto P\text{-}\mathsf{cog} \) mapping a dg-operad
to its category of cogebras. Indeed, one can show that there exists a
non-zero dg-operad without nontrivial cogebras%
~\cite{arXiv:190202551L}:
\[
	\exists~P \neq 0, \quad \cog P = 0.
\]

However, when \( \ground \) is tensored, we get a criterion to check
whether the representation functor is faithful.

\begin{proposition}[Faithfulness of representations]%
	\label{theorem: Rep is faithful}
	Assume that \( \ground\) is faithfully tensored over \(\moncat \),
	then the representations functor
	\[
		\begin{tikzcd}
			\monoids{\moncat} \ar[rr, "\repfunctor"] &&
			\left(\slice{\largecats}{\ground}\right)\op
		\end{tikzcd}
	\]
	is faithful.
	Equivalently, for every monoid \( M \), the unit map
	\[
		M \longrightarrow \auto {\forget M}
	\]
	is a monomorphism.
\end{proposition}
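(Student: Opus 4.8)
The plan is to prove the ``equivalently'' form of the statement: for every monoid $M$ the unit
$\eta_M \from M \to \auto{\forget M}$ of the adjunction $\repfunctor \adj \autofunctor$
[\cref{theorem: adjunction}] is a monomorphism. Faithfulness of $\repfunctor$ then follows by
naturality of $\eta$ (if $\repfunctor\psi = \repfunctor\psi'$ then
$\eta_N \circ \psi = \autofunctor\repfunctor(\psi)\circ\eta_M = \autofunctor\repfunctor(\psi')\circ\eta_M = \eta_N\circ\psi'$,
whence $\psi = \psi'$), which is the standard principle that a left adjoint is faithful exactly
when its unit is objectwise monic. Moreover the forgetful functor $\monoids{\moncat} \to \moncat$
is faithful, hence reflects monomorphisms, so it is enough to check that the underlying morphism
of $\eta_M$ in $\moncat$ is monic.

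The idea is to test $\eta_M$ against the free representations. Since $\ground$ is tensored, the
equivalence $\rep M \isonat \catofmod{\widetilde M}$ over $\ground$ produced in the proof of
\cref{thm: accessible category of rep} (which uses only that $\ground$ is tensored, not
accessibility) transports the free Eilenberg--Moore functor to a left adjoint of $\forget M$
sending $Y \in \ground$ to the representation with underlying object $M \otimes Y$ whose structure
map $\bar\mu_Y \from M \to [M\otimes Y, M\otimes Y]$ is the adjunct, across the tensoring, of the
free-module multiplication $M \otimes (M \otimes Y) \isonat (M\otimes M)\otimes Y \xrightarrow{\mu \otimes Y} M\otimes Y$.
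By the description of $\eta_M$ from the proof of \cref{theorem: adjunction} --- its composite with
the projection $\auto{\forget M} \to [X,X]$ indexed by $(X,\alpha)\in\rep M$ is $\alpha$ --- the
composite of $\eta_M$ with the projection at this free representation is exactly $\bar\mu_Y$. In
particular any $g, g' \from C \to M$ equalized by $\eta_M$ are equalized by every $\bar\mu_Y$, so it
suffices to show that the family $\{\bar\mu_Y\}_{Y \in \ground}$ is jointly monic; note that no
(possibly nonexistent) large product over $\ground$ is needed.

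For joint monicity, transpose across the tensoring adjunction: $\bar\mu_Y g = \bar\mu_Y g'$ for all
$Y$ says precisely that $\phi \coloneqq \mu \circ (g \otimes \mathrm{id}_M)$ and
$\phi' \coloneqq \mu \circ (g' \otimes \mathrm{id}_M)$, two morphisms $C\otimes M \to M$ of $\moncat$,
induce the same action $(-)\otimes Y \from \moncat \to \ground$ for every object $Y$ of $\ground$
(equivalently, $\phi \otimes - = \phi' \otimes -$ as maps of endofunctors of $\ground$). This is the
one place the hypothesis enters: faithful tensoring forces $\phi = \phi'$. Finally, precomposing
$\mu \circ (g \otimes \mathrm{id}_M) = \mu \circ (g' \otimes \mathrm{id}_M)$ with $\mathrm{id}_C \otimes u$,
where $u \from \monu \to M$ is the unit of $M$, and using the right unit axiom
$\mu \circ (\mathrm{id}_M \otimes u) = \rho_M$ together with naturality of the unitor $\rho$,
collapses both sides to $g \circ \rho_C$ and $g' \circ \rho_C$; since $\rho_C$ is invertible,
$g = g'$.

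I expect the main obstacle to be organizational rather than conceptual: one must keep the monoidal
product of $\moncat$ carefully distinct from the tensoring action of $\moncat$ on $\ground$ and
check that the coherence isomorphisms relating them drop out (they are the same for $g$ and $g'$),
and one must line up the invocation of ``faithfully tensored'' with its precise definition in the
appendix. The conceptual content --- that the free representations already detect all of $M$ via
the unit axiom --- is exactly the classical reason a ring embeds into the endomorphism ring of the
forgetful functor on its category of modules.
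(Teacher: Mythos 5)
Your proposal is correct, but it is organized differently from the paper's proof, so a comparison is worthwhile. The paper argues faithfulness directly: given monoid maps \( \phi, \psi \from M \rightrightarrows N \) with \( \forget\phi = \forget\psi \), it invokes the (partially defined) left adjoints \( \phi_! = \psi_! \) of the restriction functors, evaluates them on the free representations \( M \otimes X \) to get \( \phi_!(M\otimes X) \isonat N \otimes X \), deduces \( \phi \otimes X = \psi \otimes X \) for every \( X \) from the units of those adjunctions, and concludes \( \phi = \psi \) by faithful tensoring --- no monoid-unit cancellation is needed, because the equality already concerns \( \phi \) and \( \psi \) themselves. You instead prove the ``equivalently'' clause head-on: using the explicit description of the unit from the proof of \cref{theorem: adjunction} (its projection at a representation \( (X,\alpha) \) is \( \alpha \)), you test \( \eta_M \) against the projections at the free representations \( (M \otimes Y, \bar\mu_Y) \), transpose, apply faithful tensoring to get \( \mu\circ(g\otimes\mathrm{id}) = \mu\circ(g'\otimes\mathrm{id}) \), and cancel with the monoid unit; faithfulness of \( \repfunctor \) then follows formally by naturality of the unit. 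Both proofs turn on the same two ingredients --- free representations and the faithful-tensoring hypothesis --- but your route has the merit of actually establishing the ``equivalently'' form (which the paper asserts without argument), of avoiding the slightly delicate partially-defined-left-adjoint device, and of giving something marginally stronger (the underlying map of \( \eta_M \) in \( \moncat \) is monic against arbitrary small test objects, not just monoid maps), at the cost of the extra coherence bookkeeping and the unit-cancellation step you flag. One small caveat, present at the same level in the paper's own statement: the unit lands in a large monoid, so ``monomorphism'' should be read as tested against small objects (which is all that the equivalence with faithfulness of \( \repfunctor \) on \( \monoids\moncat \) requires, and all your argument literally delivers, since \( C \otimes Y \) need not exist for large \( C \)).
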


\begin{proof}
	Let \( \phi, \psi \from M \rightrightarrows N \)
	be two morphisms of monoids such that
	\[
		\begin{tikzcd}
			\rep N \ar[rr, "\forget \phi = \forget \psi"] &&\rep M.
		\end{tikzcd}
	\]
	If \( \phi_{!} \) denotes
	the (partially defined) left adjoint to \(\forget \phi\)
	and \( \psi_{!} \) the (partially defined) left adjoint
	to \(\forget \psi\),
	then one has \( \phi_{!} = \psi_{!} \).
	Let \( X\) be an object of \(\ground\),
	because \(\ground \) is tensored
	over \( \moncat\), the monoid \(M\) acts on \(M \otimes X\)
	and \(M \otimes X \) is then the free representation
	of \( M \) induced on \( X \).
	The same goes for \( N \otimes X \).
	As a consequence, one has
	\[
		\forget \phi \circ \phi_{!}(M \otimes X) \isonat
		\forget \psi \circ \psi_{!}(M \otimes X) \isonat N \otimes X.
	\]
	Using the units of the adjunctions, one then gets that
	\[
		\begin{tikzcd}
			M \otimes X
			\ar[rrr, "\phi \otimes X = \psi \otimes X"]
			&&&
			N \otimes X.
		\end{tikzcd}
	\]
	Since this is true for every \( X\), we get \(\phi = \psi \).
\end{proof}

\appendix

\section{Terminology of enriched categories}%
\label{Appendix: enriched categories}

We let the reader turn to Kelly%
~\cite{isbn:0-521-28702-2}
for a detailed exposition on categories enriched
in a monoidal category \( (\moncat, \otimes, \monu) \).
In order to not be bothered by size issues, we fix once and for all
three infinite
inaccessible cardinals \( \textrm{L} < \textrm{XL} < \textrm{XXL} \)
and use the dictionary
\[
	\text{small} \coloneqq \textrm{L}\text{-small}; \quad
	\text{large} \coloneqq \textrm{XL}\text{-small}; \quad
	\text{very large} \coloneqq \textrm{XXL}\text{-small}.
\]
We now assume that \( \moncat \) is large (has large sets of objects
and morphisms) and has all small limits and colimits. In what follows
we consider a large \( \moncat \)-enriched category
\[
	\begin{tikzcd}
		\ground\op \times \ground \ar[rr,"{[-,-]}"] && \moncat
	\end{tikzcd}
\]
and assume that \( \ground \) is large.

\subsection{Enlargement of the universe}%
\label{Appendix: enlargement of the universe}

For convenience (when computing over large diagrams), we shall enlarge
\( \moncat \): we choose a very large monoidal category
\( (\biguni, \otimes, \monu) \) with a full monoidal embedding
\[
	\begin{tikzcd}
		(\moncat, \otimes, \monu) \ar[rr, hook] &&
		\left(\biguni, \otimes, \monu\right).
	\end{tikzcd}
\]
The enlarged universe can be chosen to
be locally large, have all large limits and colimits and the embedding
can be assumed to commute with small limits and colimits.
This is discussed for example by Kelly%
~\cite[\S2.6]{isbn:0-521-28702-2} (albeit in the closed symmetric
setting).

The \( \moncat \)-category \( \ground \) can now without effort
be seen as a \( \biguni \)-category
\[
	\begin{tikzcd}
		\ground\op \times \ground
		\ar[rr,"{[-,-]}"] && \moncat \ar[r, hook]
		&\biguni.
	\end{tikzcd}
\]

\subsection{Properties of enrichments}

\begin{definition}[Closed monoidal category]%
	\label{definition: closed}
	One says that \( \moncat \) is \emph{closed} when the functor
	\( Y \mapsto Y \otimes X \) has a right adjoint \( Z \mapsto X^Z \)
	for each object \( X \) in \( \moncat \).
\end{definition}

\begin{definition}[Tensored]%
	\label{definition: tensored category}
	One says that \( \ground \) is \emph{tensored} over \( \moncat \)
	whenever \( \moncat \) is closed
	and for every \( X \in \ground \) and \( M \in \moncat \),
	the functor
	\[
		Y \longmapsto {[X,Y]}^M
	\]
	is \( \moncat \)-representable by an object
	denoted \( M \otimes X \in \ground \).
	In that case, since \( \moncat \) is closed the induced functor
	\[
		\begin{tikzcd}
			(\moncat, \otimes, \monu)
			\ar[rrr, "M \longmapsto (M \otimes -)"]
			&&& (\fun(\ground,\ground), \circ, \id \ground)
		\end{tikzcd}
	\]
	is naturally endowed with a monoidal structure.
\end{definition}

\begin{definition}[Faithfully tensored]
	We shall say that \( \ground \) is \emph{faithfully} tensored over
	\( \moncat \) if it is tensored and the functor
	\[
		\begin{tikzcd}
			\moncat \ar[rrr, "M \longmapsto (M \otimes -)"]
			&&& \fun(\ground,\ground)
		\end{tikzcd}
	\]
	is faithful.
\end{definition}

\begin{definition}[Accessibly tensored]
	We shall say that \( \ground\) is \emph{accessibly} tensored
	over \(\moncat \)
	if it is tensored,
	both \( \moncat\) and \(\ground \) are accessible
	and for every \( M\in \moncat \), the functor
	\[
		\begin{tikzcd}
			\ground \ar[rrr, "X \longmapsto M \otimes X"] &&& \ground
		\end{tikzcd}
	\]
	is accessible.
\end{definition}

\begin{definition}[Accessibly enriched]%
	\label{def: accessible enrichment}
	When \( \moncat \) and \( \ground \) are both accessible,
	we shall say that \( \ground \) is \emph{accessibly enriched}
	if the exists a small cardinal \( \kappa \)
	such that for every \(Y \in \ground \), the functor
	\[
		\begin{tikzcd}
			\ground\op \ar[rrr, "X \longmapsto {[X, Y]}"] &&& \moncat
		\end{tikzcd}
	\]
	commutes with \( \kappa \)-cofiltered limits.
\end{definition}

\begin{remark}
	One can check that if \( \ground \) is accessibly tensored,
	it is then accessibly enriched.
\end{remark}

\section{Examples of contexts of application}%
\label{Appendix: contexts}

In this appendix, we give several application contexts for the
adjunction
\[
	\begin{tikzcd}[ampersand replacement=\&]
		\monoids{\biguni} \arrow[rr, shift left=2,"\repfunctor"]
		\&
		\&
		\left(\slice{\largecats} \ground\right)\op.
		\arrow[ll, shift left=2, "\autofunctor"]
	\end{tikzcd}
\]
In each context, the terminology is specific, both for monoids and
for their categories of representations.

\subsection{Using a closed symmetric monoidal category}

In the next examples, we fix a presentable closed symmetric monoidal
category \( (\ground, \otimes, \monu) \)
and denote its internal hom by \( \langle - ,- \rangle \).
We then consider several enrichments for \( \ground \).

Potential examples of such closed symmetric monoidal categories include
the category of sets, vector spaces or
coassociative cogebras (more generally cogebras over Hopf operads).
It also includes the categories of sheaves valued in those categories.

\subsubsection{Self enrichment}

This one is the most obvious, since the monoidal structure of
\( \ground \) is closed, it is self-enriched via
\[
	[X, Y] \coloneqq \langle X, Y \rangle.
\]
In this context, the general idea of the adjunction
\( \repfunctor \adj \autofunctor \) was 
well-known to people doing reconstruction theorems à la Tannaka.
It appears for example in Street's
\emph{Quantum groups: a path to current algebra}%
~\cite[Ch.~16]{isbn:978-0521695244}.

\subsubsection{Operadic enrichment}
Let us denote by \( \ground^{\symgroup{}\op} \)
the category of symmetric sequences:
sequences of objects \( M(n) \) of \( \ground \) endowed with
right \( \symgroup n \)-actions for every natural \( n \).
The category \( \ground \) is accessibly tensored
over the category of symmetric sequences via the formula
\[
	M \triangletimes X \coloneqq \coprod_{n \in \naturals} M(n)
	\otimes_{\symgroup n} X^{\otimes n}.
\]
This induces a monoidal structure on symmetric sequences
\[
	M \triangletimes N \coloneqq \coprod_{n \in \naturals} M(n)
	\otimes_{\symgroup n} N^{\convolution n}.
\]
Where \( \convolution \) denotes the convolution of symmetric sequences.
The associated enrichment is given by
\[
	[X, Y](n) \coloneqq \langle X^{\otimes n}, Y \rangle.
\]
Monoids in symmetric sequences are called operads
\[
	\mathsf{Op}(\ground) \coloneqq \monoids {\ground^{\symgroup{}\op},
	\triangletimes, \monu_{\triangletimes}}
\]
Given an operad \( P \), its category of representations is called the
category of \( P \)-algebras.
One thus gets an adjunction
\[
	\begin{tikzcd}[ampersand replacement=\&]
		\mathsf{Op}(\ground) \arrow[rr, shift left=2,"\mathsf{Alg}"]
		\& \&
		\left(\slice{\accesscats} \ground\right)\op.
		\arrow[ll, shift left=2, "\autofunctor"]
	\end{tikzcd}
\]

\subsubsection{The other (cogebraic) operadic enrichment}%
\label{appendix: cogebras}

This time we let \( \ground^{\symgroup{}} \) be the category of
symmetric sequences with left actions of the symmetric groups.
It admits a monoidal structure given by
\[
	M \optriangletimes N \coloneqq \coprod_{n \in \naturals}
	M^{\convolution n} \otimes_{\symgroup n} N(n)
\]
and the associated enrichment is
\[
	[X, Y](n) \coloneqq \langle X, Y^{\otimes n} \rangle.
\]
Since left and right actions of symmetric groups are equivalent, one
has an equivalence of categories
\[
	\monoids {\ground^{\symgroup{}},
	\optriangletimes, \monu_{\optriangletimes}} \isonat
	\monoids {\ground^{\symgroup{}\op},
	\triangletimes, \monu_{\triangletimes}} \isonat
	\mathsf{Op}(\ground).
\]
In this case,
the category of representations of an operad \( P \)
is its category of cogebras.
Conversely,
the functor \( \autofunctor \) associates to a functor \( F \),
seen as an object of the functor category,
its \emph{coendomorphism operad}.

In general, the category of \( P \)-cogebras may not 
be presentable, although (for example) it is presentable if the ground
category is dg-vector spaces%
~\cite{arXiv:180301376L}.
Thus, one has the adjunction
\[
	\begin{tikzcd}[ampersand replacement=\&]
		\mathsf{Op}(\mathsf{dgVect})
		\arrow[rr, shift left=2,"\mathsf{Cog}"]
		\& \&
		\left(\slice{\accesscats} {\mathsf{dgVect}}\right)\op.
		\arrow[ll, shift left=2, "\autofunctor"]
	\end{tikzcd}
\]

This example arises naturally in applications and appeared, for example,
in unpublished work by May, who considered it well-known. 

In one application, the singular chains functor from 
topological spaces to chain complexes factors through the 
category of \( \einfinity \)-cogebras in chain complexes.

The following stable improvement of this example was pointed out to
us by Arone: the coendomorphism operad of the suspension functor from
pointed spaces to spectra can be shown to be weakly equivalent to the
commutative operad%
~\cite{doi:10.24033/ast.904}.

\subsubsection{Propic enrichments}

Going further, one can enrich \( \ground \) in the category
of bisymmetric sequences
\( \ground^{\symgroup{}\op \times \symgroup{}} \) using
\[
	[X, Y](p, q) \coloneqq \langle X^{\otimes p}, Y^{\otimes q}\rangle.
\]
There are several monoidal structures on bisymmetric sequences
compatible with these enrichment objects, depending on the classes of
graphs involved in the definition of the monoidal structure. One can
allow connected graphs, in which 
case the monoids are properads%
~\cite[2.1]{doi:10.1016/j.crma.2004.04.004}, 
or allow only simply connected graphs, in which case the monoids are 
dioperads%
~\cite[4.2]{doi:10.4310/mrl.2003.v10.n1.a11}. 
Similar but more exotic examples are also possible%
~\cite{doi:10.1090/surv/203}.

\subsection{Examples with exogenic enrichments}

\subsubsection{Representations of topological monoids}%
\label{appendix: Tannaka}
The following example is taken from the duality between 
topological groups and their categories of representations 
due to Tannaka%
~\cite{doi:10.1007/bfb0084235}.
The category of finite dimensional vector spaces is canonically
enriched in topological spaces. Since this category is small, one
gets an adjunction
\[
	\begin{tikzcd}[ampersand replacement=\&]
		\monoids{\mathsf{Top}}
		\arrow[rr, shift left=2,"\mathsf{Rep}_\mathrm{fd}"]
		\& \&
		\left(\slice{\mathsf{Cat}} {\mathsf{Vect}_\mathrm{fd}}\right)\op
		\arrow[ll, shift left=2, "\autofunctor"]
	\end{tikzcd}
\]
where \( \autofunctor \) associates
to any functor \( F \from \domain \to \mathsf{Vect}_\mathrm{fd} \)
its topological monoid of endomorphisms.

\subsubsection{Bigebras}
Let \( \fieldk \) be a field.
The category of associative \( \fieldk \)-algebras
is naturally cotensored over \( \fieldk \)-cogebras: given a cogebra
\( V \) and an algebra \( \Lambda \), convolution gives
\( \hom {\fieldk} V \Lambda \) a structure of associative algebra.
This cotensorization comes with an enrichment and a tensorization%
~\cite{arXiv:1309.6952}.

Monoid objects in cogebras are bigebras.
Given a bigebra \( H \),
it is an exercise to verify that the category of representations
\( \rep H \) is naturally isomorphic to
the category of \( H \)-module algebras
studied by Hopf theorists%
~\cite[4.1.1]{doi:10.1090/cbms/082}
equipped with the functor to algebras forgetting the \( H \)-module
structure.
We thus obtain an adjunction
\[
	\begin{tikzcd}[ampersand replacement=\&]
		\mathsf{Bigebras} \arrow[rr, shift left=2,"\mathsf{Mod}"]
		\& \&
		\left(\slice{\accesscats} {\mathsf{Alg}}\right)\op
		\arrow[ll, shift left=2, "\autofunctor"]
	\end{tikzcd}
\]
where for an accessible functor \( F \from \domain\to \mathsf{Alg} \),
the endomorphism bigebra \( \autofunctor(F) \)
is universal among bigebras acting compatibly
on the objects of \( \domain \).

\section*{Acknowledgements}

The authors would like to thank Rune Haugseng, Theo Johnson-Freyd,
Johan Leray, Emily
Riehl, and Claudia Scheimbauer for useful discussions, as well as 
Greg Arone and Birgit Richter for pointing us to relevant literature.

\bibliography{ms.bbl}
\bibliographystyle{style}

\end{document}